\documentclass[12pt]{article}
\usepackage[utf8]{inputenc}
\usepackage[T1]{fontenc}
\usepackage[numbers,sort&compress]{natbib}
\usepackage[margin=1in]{geometry}
\usepackage{amsmath, amssymb, amsthm}
\usepackage{graphicx}
\usepackage{hyperref}
\usepackage{booktabs}
\usepackage{array}
\usepackage{xcolor}
\usepackage{setspace}
\usepackage{caption}
\usepackage{lineno}

\hypersetup{colorlinks=true,linkcolor=blue!60!black,citecolor=blue!60!black,urlcolor=blue!60!black}
\newtheorem{theorem}{Theorem}[section]
\newtheorem{proposition}[theorem]{Proposition}

\newtheorem{remark}{Remark}[section]

\title{What \texorpdfstring{$R_0$}{R0} Deletes:
Eigenvectors, Non-Normality, and the Social Content
of the Basic Reproduction Number}
\author{%
Fabio Sanchez\\[3pt]
  {\small Escuela de Matemática-CIMPA, Universidad de Costa Rica,}\\
  {\small Ciudad Universitaria Rodrigo Facio, San José, 11501, Costa Rica.}
}
\date{\today}

\begin{document}

\maketitle

\begin{abstract}
The basic reproduction number is the spectral radius of a matrix, $R_0=\rho(K)$. Taking that definition literally, we ask what $K\mapsto\rho(K)$ discards. A matrix carries three kinds of information: its dominant eigenvalue, its dominant eigenvectors, and its departure from normality. $R_0$ keeps only the first; the other two are where the epidemic's social structure lives. The right eigenvector is the burden distribution, the left the source distribution; they coincide when the system is normal and diverge under heterogeneity. Across the $177$ national contact matrices of Prem et al., the operator is \emph{never} normal, and once age-specific susceptibility is included, its source and burden eigenvectors are misaligned by a median of $26^{\circ}$, exceeding $40^{\circ}$ in some countries: the groups that drive transmission are systematically not those that bear it. We prove that under reciprocal contact this misalignment obeys a Kantorovich bound set by the susceptibility contrast $q_{\max}/q_{\min}$ alone, and zero when susceptibility is uniform, with the excess in real, non-reciprocal matrices contributed by contact asymmetry. Transient amplification, by contrast, stays small, so the operative social content is the misalignment, not transient blow-up. The omission also has teeth: because minimizing $R_0$ protects those who \emph{spread} infection, while minimizing deaths protects those who \emph{die} from it, the two target different age groups; the former sometimes raises average infection fatality even as it lowers the scalar. When contact is strongly structured and susceptibility is heterogeneous, we suggest reporting $R_0$ along with its eigenvectors rather than reporting it alone.
\end{abstract}


\section{Introduction}

The basic reproduction number is usually introduced as the expected number of secondary infections produced by one infectious individual in a fully susceptible population, with the threshold reading that infection can invade when $R_0>1$. In
structured models, this quantity is defined precisely as the spectral radius of the next-generation matrix $K$,
\begin{equation}
R_0=\rho(K),
\end{equation}
where $K_{ij}$ is the expected number of infections generated in group $i$ by one infected individual of group $j$ \cite{diekmann2010,vandendriessche2002}.

Much has been written about $R_0$ being ``context dependent'' rather than an intrinsic property of a pathogen \cite{delamater2019}. That claim is correct, but it is usually left as prose. It becomes sharp once we notice what kind of object $\rho(\cdot)$ is. A matrix encodes three separable pieces of information: its leading eigenvalue, its leading eigenvectors, and its departure from normality (the failure of its eigenvectors to be orthogonal). The map $K\mapsto\rho(K)$
retains only the first. Whatever social structure lives in the other two is deleted by construction, not by oversight.

The organizing idea of this paper is that those deleted pieces are the social content of the epidemic, each with a clean epidemiological meaning:
the leading \emph{eigenvector} is the distribution of harm; the \emph{non-normality} is the heterogeneity; and both are erased the moment $K$ is collapsed to $R_0$.
We first develop the argument mathematically (Sections~\ref{sec:nonnormal}--\ref{sec:transient}), proving in particular a Kantorovich bound (Proposition~\ref{prop:kant}) that caps the source--burden misalignment by the susceptibility contrast alone; we then test it on real national contact structures (Section~\ref{sec:empirical}), which both confirm the equity claim and temper the
transient one.


\paragraph{Contribution.}\emph{What is new here} is three concrete additions, none of them the interpretation itself. \emph{(i) A bound.} For reciprocal contacts, the source--burden misalignment obeys a Kantorovich inequality, controlled by the susceptibility contrast $q_{\max}/q_{\min}$ alone and vanishing under homogeneity (Proposition~\ref{prop:kant}), reducing the misalignment to a single scalar fixed by the susceptibility profile. \emph{(ii) A large-scale measurement.} Across the $177$ national contact structures of Prem et al.\ we quantify the misalignment, show it is never zero, and separate it into a susceptibility component that the bound caps and a contact-asymmetry remainder, characterizing next-generation-operator non-normality across every country for which synthetic contact data exist. \emph{(iii) A decision consequence.} We show the discarded eigenvector changes an intervention: $R_0$-optimal and mortality-optimal protection target different age groups in all $177$ countries.

\emph{What is not new} is the eigenstructure these rest on, and it is worth stating plainly. In matrix population biology the dominant right and left eigenvectors of a projection operator are the stable stage distribution and Fisher's reproductive value; their systematic development is due to Caswell \cite{caswell2001}. Epidemiology inherited both through the next-generation matrix \cite{diekmann2010,vandendriessche2002}: $R_0=\rho(K)$ is its dominant eigenvalue, the right eigenvector $v$ is the stable distribution of infections across groups (the case mix of an invading outbreak), and the left eigenvector $w$ is the group-specific reproductive value. That the two need not coincide is implicit in any asymmetric transmission model, and the sensitivity $\partial\rho/\partial K_{ij}\propto w_i v_j$ is the standard elasticity result underlying the theory of whom to vaccinate or shield \cite{caswell2001}. Non-normality and its transient consequences were formalized in numerical linear algebra \cite{trefethen2005} and imported into ecology as \emph{reactivity} by Neubert and Caswell \cite{neubert1997}, thence to epidemic transient growth \cite{hosack2008}; and that $R_0$ reflects host social structure rather than being pathogen-intrinsic is recognized \cite{delamater2019}. What this literature has generally not done is treat the \emph{gap} between $w$ and $v$ as an object in its own right, a measurable, boundable, interpretable quantity rather than as intermediate machinery for computing a threshold or a sensitivity. That gap is our subject.

We therefore claim no novelty for the interpretation. That the eigenvectors carry the distributions of burden and transmission, and that $R_0=\rho(K)$ suppresses them, follows from the sources above and is, in hindsight, immediate; the same is true of the observation that $R_0$ absorbs social structure. The incremental contribution is items (i)--(iii); the projection framing we use to organize them is expository, and a reader who finds the interpretation familiar loses nothing by treating it as a lens on those three results.

\section{The transmission matrix is generically non-normal}\label{sec:nonnormal}

It is useful to separate the mechanisms bundled into $K$ through the conceptual decomposition
\begin{equation}
K=Q\,C\,D,
\end{equation}
where $C$ is the social contact structure (who meets whom, and how often), $Q$ is a diagonal matrix of per-contact biological transmissibility and susceptibility, and $D$ is a diagonal matrix of time spent in transmission-relevant states. The
factorization is a device, not a theorem; its purpose is to locate where heterogeneity enters.

Contacts are often close to reciprocal, so one might take $C$ symmetric and hope this makes $K$ symmetric, hence \emph{normal}, so that eigenvalues alone would tell the whole story. They do not. A matrix is normal when $KK^{\top}=K^{\top}K$, equivalently when its eigenvectors can be chosen orthogonal. With $C$ symmetric,
\begin{equation}
KK^{\top}=QCD^{2}CQ,\qquad K^{\top}K=DCQ^{2}CD,
\end{equation}
and these agree only when $Q$ and $D$ are scalar multiples of the identity; that is, only when every group has identical transmissibility and identical infectious duration. \emph{Unequal biology or unequal exposure time across groups makes $K$ non-normal even when contacts are perfectly reciprocal.} Non-normality is not an exotic edge case here; it is the mathematical footprint of heterogeneity itself. (The eigenvalues may remain real, so this is not a statement about oscillation; the effects below occur in the plain metric of counted infections.)

\section{The eigenvector is the equity content}\label{sec:equity}

Iterating infection by generations, $x_{n+1}=Kx_{n}$, the population settles onto the leading right eigenvector $v$, defined by $Kv=\rho v$. Thus $v$ is the stable distribution of \emph{incident infections across groups}: who gets infected. The
leading left eigenvector $w$, with $w^{\top}K=\rho w^{\top}$, is the \emph{reproductive value}: how much a single case in each group contributes to long-run transmission \cite{caswell2001}. Reproductive value is a per-case quantity; a group's realized contribution to spread combines it with the group's incidence, $w_g v_g$ (this product reappears in Section~\ref{sec:decision} as the sensitivity of $R_0$ to protecting group $g$).

For a normal $K$ the eigenvectors are orthogonal and, in the symmetric case, $v=w$: the groups that drive transmission are the groups that bear it. Reciprocity of harm is exactly what normality encodes. Heterogeneity is the opposite statement: the source distribution $w$ and the burden distribution $v$ point in
different directions. Intuitively, susceptibility governs how readily a group is \emph{infected}, a sink property that raises its burden, not how much it goes on to \emph{infect others}, which under reciprocal contact and uniform infectiousness is set separately. Concentrating susceptibility in some groups therefore places a burden on them without a matching rise in their onward transmission, tilting $v$ toward the susceptible and $w$ away from them; only uniform susceptibility keeps the two aligned. A single scalar measures how far apart they are, the condition number of the leading eigenvalue,
\begin{equation}
\kappa=\frac{\lVert v\rVert\,\lVert w\rVert}{\lvert w^{\top}v\rvert}\;\ge\;1,
\end{equation}
with $\kappa=1$ precisely in the aligned, egalitarian case and $\kappa$ growing as sources and sufferers separate. This is the quantity $R_0$ cannot contain: two societies with identical $R_0$ can carry very different $\kappa$, one spreading burden reciprocally ($\kappa=1$, $\theta=0$) and the other concentrating it on a group that transmits little in return (in the real data of Section~\ref{sec:empirical}, up to $\kappa\approx1.37$, i.e.\ $\theta\approx43^{\circ}$; larger still if susceptibilities are more unequal). The eigenvalue indicates that invasion is possible; only the eigenvectors indicate to whom it happens. Section~\ref{sec:bound} bounds this misalignment in terms of the susceptibility contrast alone; Section~\ref{sec:empirical} measures it on real contact data and finds it substantial.

\begin{table}[h]
\centering\small
\begin{tabular}{@{}p{0.36\textwidth} p{0.58\textwidth}@{}}
\toprule
Quantity & Epidemiological reading \\
\midrule
$\rho(K)=R_0$ (dominant eigenvalue) & Invasion threshold, the one piece $R_0$ keeps. \\
$v$ (right eigenvector) & Burden: who becomes infected. \\
$w$ (left eigenvector) & Reproductive value: who drives transmission. \\
$\theta,\ \kappa=1/\cos\theta$ (source--burden angle, condition number) & How far $w$ and $v$ separate; $\theta=0$ when aligned. \\
$\nu$ (departure from normality) & How far $K$'s eigenvectors are from orthogonal. \\
$\sigma_{\max}(K)/\rho$ (reactivity ratio) & Largest transient surge a subcritical $K$ allows. \\
\bottomrule
\end{tabular}
\caption{Key quantities. $R_0$ retains only the first row; the rest are the ``social content'' the spectral radius discards, and all are computed en route to $\rho(K)$. Defined in Sections~\ref{sec:nonnormal}--\ref{sec:transient}.}
\label{tab:defs}
\end{table}

\section{How far can susceptibility bend the eigenvectors?}\label{sec:bound}

Section~\ref{sec:equity} identified the condition number $\kappa$ as the carrier of the equity content; we now bound it. Consider the on-model case $K=QC$ with $C$ symmetric (reciprocal contacts) and $Q=\operatorname{diag}(q_1,\dots,q_n)$, $q_i>0$, the age-specific susceptibility; this is the operator of Section~\ref{sec:empirical} with contact taken reciprocal. In plain terms, the result below says that when mixing is reciprocal, the source and burden distributions cannot be pulled arbitrarily far apart: the angle between them is capped by a simple function of a single number, the ratio of the most to the least susceptible group, and closes to zero when susceptibility is uniform.

\begin{proposition}\label{prop:kant}
Let $C$ be symmetric, entrywise nonnegative and irreducible, and $Q=\operatorname{diag}(q_i)$ with $q_i>0$. Write $K=QC$, with Perron eigenvalue $\rho$, positive right eigenvector $v$ $(Kv=\rho v)$ and left eigenvector $w$ $(w^{\top}K=\rho w^{\top})$. Then:
\begin{enumerate}
\item[\textup{(i)}] $K$ is diagonally symmetrizable, $K=Q^{1/2}SQ^{-1/2}$ with $S=Q^{1/2}CQ^{1/2}$ symmetric; its eigenvalues are real, and $v=Q^{1/2}\varphi$, $w=Q^{-1/2}\varphi$, where $\varphi>0$ is the unit Perron eigenvector of $S$ (its dominant eigenvector, with all entries positive).
\item[\textup{(ii)}] Writing $p_i=\varphi_i^{2}$ \textup{(}a probability vector over groups\textup{)} and $\mathbb{E}_p[\cdot]$ for the corresponding weighted average, the condition number and source--burden angle satisfy
\begin{equation}
\kappa=\frac{\lVert v\rVert\,\lVert w\rVert}{w^{\top}v}
=\Big(\textstyle\sum_i p_i q_i\Big)^{1/2}\Big(\sum_i p_i q_i^{-1}\Big)^{1/2}
=\big(\mathbb{E}_p[q]\,\mathbb{E}_p[q^{-1}]\big)^{1/2},
\qquad \cos\theta=\frac1\kappa .
\end{equation}
\item[\textup{(iii)}] With $m=\min_i q_i$, $M=\max_i q_i$ and contrast $r=M/m$,
\begin{equation}
1\;\le\;\kappa\;\le\;\tfrac12\!\left(\sqrt{r}+\tfrac{1}{\sqrt{r}}\right),
\qquad\text{equivalently}\qquad
\theta\;\le\;\arccos\frac{2\sqrt{r}}{1+r}.
\end{equation}
The lower bound holds with equality iff $Q$ is a scalar matrix. The upper bound is sharp: it is attained when $q$ takes only its extreme values $\{m,M\}$ (with $C$ placing equal Perron mass on them), and is a strict supremum otherwise, since a positive Perron vector must assign positive weight to any intermediate susceptibilities.
\end{enumerate}
\end{proposition}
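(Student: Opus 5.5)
The proof proceeds in three stages mirroring (i)--(iii): a similarity transformation, a direct computation of $\kappa$ in the symmetrized coordinates, and a Kantorovich-type inequality for the resulting weighted means.

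\emph{Step (i).} Since $Q$ is diagonal with positive entries, $Q^{1/2}$ and $Q^{-1/2}$ are well defined. We have $Q^{1/2}SQ^{-1/2}=Q^{1/2}Q^{1/2}CQ^{1/2}Q^{-1/2}=QC=K$. Because $S$ is symmetric, $K$ is similar to a symmetric matrix, so its eigenvalues are real. The matrix $S$ is nonnegative and has the same zero pattern as $C$, so it is irreducible. Perron--Frobenius then gives a simple eigenvalue $\rho$ of $S$ (hence of $K$) with a unit eigenvector $\varphi>0$. Setting $v=Q^{1/2}\varphi$ gives $Kv=Q^{1/2}S\varphi=\rho v$. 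Setting $w=Q^{-1/2}\varphi$ gives $w^{\top}K=\varphi^{\top}Q^{-1/2}Q^{1/2}SQ^{-1/2}=\rho\,\varphi^{\top}Q^{-1/2}=\rho w^{\top}$, using the symmetry of $S$. Both vectors are positive, and by simplicity of the Perron root they are the Perron vectors up to scaling. The quantity $\kappa$ is scale invariant, so this normalization costs nothing.

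\emph{Step (ii).} We compute directly. Since $w^{\top}v=\varphi^{\top}Q^{-1/2}Q^{1/2}\varphi=\lVert\varphi\rVert^2=1>0$, the absolute value in the definition of $\kappa$ can be dropped. We also have $\lVert v\rVert^2=\sum_i q_i\varphi_i^2=\mathbb{E}_p[q]$ and $\lVert w\rVert^2=\sum_i q_i^{-1}\varphi_i^2=\mathbb{E}_p[q^{-1}]$. Here $p_i=\varphi_i^2$ is a probability vector because $\lVert\varphi\rVert=1$, and $p_i>0$ for every $i$. The identity $\cos\theta=w^{\top}v/(\lVert v\rVert\lVert w\rVert)=1/\kappa$ is just the definition of the angle.

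\emph{Step (iii).} This is the main step, and it is the classical Kantorovich inequality.

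For the lower bound, Cauchy--Schwarz gives $1=\big(\sum_i p_i q_i^{1/2}q_i^{-1/2}\big)^2\le \mathbb{E}_p[q]\,\mathbb{E}_p[q^{-1}]$. Since all $p_i>0$, equality holds iff $q_i^{1/2}\propto q_i^{-1/2}$ for every $i$, that is, iff $q$ is constant, i.e.\ $Q$ is a scalar matrix.

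For the upper bound, every $q_i\in[m,M]$ satisfies $(q_i-m)(M-q_i)\ge0$, which rearranges to $q_i+mM/q_i\le m+M$. Averaging against $p$ gives $\mathbb{E}_p[q]+mM\,\mathbb{E}_p[q^{-1}]\le m+M$. Applying AM--GM to the two terms on the left yields $2\sqrt{mM\,\mathbb{E}_p[q]\,\mathbb{E}_p[q^{-1}]}\le m+M$. Hence $\kappa^2\le (m+M)^2/(4mM)$, so $\kappa\le\tfrac12(\sqrt r+1/\sqrt r)$. The angle form follows from $\cos\theta=1/\kappa=2\sqrt r/(1+r)$ together with the fact that $\arccos$ is decreasing.

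\emph{Sharpness and strictness.} Equality in the upper bound requires two things. First, the averaged quadratic inequality must be tight, which forces $p$ to be supported on indices with $q_i\in\{m,M\}$. Second, AM--GM must be tight, which forces $\mathbb{E}_p[q]=mM\,\mathbb{E}_p[q^{-1}]$; with $p$ putting total mass $t$ on the value $M$ and $1-t$ on $m$, this gives $t=\tfrac12$. Because the Perron vector is strictly positive, any index with an intermediate susceptibility $m<q_i<M$ receives positive weight, so the bound is then strict.

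For attainment, take $q$ two-valued and exhibit a $C$ whose Perron vector $\varphi$ of $S$ has equal mass on the two groups. In the case $n=2$ this means solving for a symmetric $C$ with $\varphi=(1,1)/\sqrt2$. That requires $q_1c_{11}+\sqrt{q_1q_2}\,c_{12}=q_2c_{22}+\sqrt{q_1q_2}\,c_{12}$, i.e.\ $q_1c_{11}=q_2c_{22}$, which is easily arranged. We will also check that $\varphi$ is then indeed the Perron vector, which holds because it is a positive eigenvector of an irreducible nonnegative matrix.

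The only delicate point is the sharpness clause. The bound is attained only in this two-valued configuration, and for general $q$ it should be stated as a supremum. This is approached by letting the Perron mass on intermediate groups tend to zero; we will treat it as a remark rather than prove it for every fixed $C$.
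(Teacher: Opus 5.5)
Your proof is correct and follows the paper's route. Both use the same symmetrization $S=Q^{1/2}CQ^{1/2}$, the same computation $w^{\top}v=1$, $\lVert v\rVert^{2}=\mathbb{E}_p[q]$, $\lVert w\rVert^{2}=\mathbb{E}_p[q^{-1}]$, and Cauchy--Schwarz plus Kantorovich for (iii).

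The only difference is one of detail. You derive Kantorovich from $(q_i-m)(M-q_i)\ge 0$ plus AM--GM, and you work out the equality case (two-valued $q$, mass $\tfrac12$ on each level); the paper only cites the inequality and asserts the sharpness clause without proof. Your deferred supremum remark also closes at once: any positive unit $\varphi$ is the Perron vector of $S=\varphi\varphi^{\top}$, and $C=Q^{-1/2}SQ^{-1/2}$ is then admissible. So every strictly positive $p$ is realizable, and continuity gives the supremum for any $n$.
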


\begin{proof}
\textup{(i)} $Q^{1/2}SQ^{-1/2}=Q^{1/2}\bigl(Q^{1/2}CQ^{1/2}\bigr)Q^{-1/2}=QC=K$, and $S^{\top}=Q^{1/2}C^{\top}Q^{1/2}=S$. Being similar to the symmetric matrix $S$, $K$ has real eigenvalues. Since $S$ is symmetric, nonnegative and irreducible, Perron--Frobenius provides a unit eigenvector $\varphi>0$ with $S\varphi=\rho\varphi$ and $\rho=\rho(K)$. Then $K\bigl(Q^{1/2}\varphi\bigr)=Q^{1/2}S\varphi=\rho\,Q^{1/2}\varphi$, and $\bigl(Q^{-1/2}\varphi\bigr)^{\top}K=\rho\bigl(Q^{-1/2}\varphi\bigr)^{\top}$.

\textup{(ii)} $w^{\top}v=\varphi^{\top}Q^{-1/2}Q^{1/2}\varphi=\varphi^{\top}\varphi=1$; moreover $\lVert v\rVert^{2}=\varphi^{\top}Q\varphi=\sum_i q_i\varphi_i^{2}$ and $\lVert w\rVert^{2}=\varphi^{\top}Q^{-1}\varphi=\sum_i q_i^{-1}\varphi_i^{2}$. Hence $\kappa=\lVert v\rVert\,\lVert w\rVert$ takes the stated form and $\cos\theta=w^{\top}v/(\lVert v\rVert\lVert w\rVert)=1/\kappa$.

\textup{(iii)} By Cauchy--Schwarz, $\mathbb{E}_p[q]\,\mathbb{E}_p[q^{-1}]\ge\bigl(\mathbb{E}_p[\sqrt q\cdot q^{-1/2}]\bigr)^{2}=1$, with equality iff $q$ is $p$-almost-surely constant; as $\varphi>0$ this forces all $q_i$ equal. The Kantorovich inequality, $\mathbb{E}_p[q]\,\mathbb{E}_p[q^{-1}]\le (M+m)^{2}/(4Mm)$ for $q_i\in[m,M]$, gives the upper bound after writing $r=M/m$ and taking square roots; the angle form follows from $\cos\theta=1/\kappa$.
\end{proof}

\begin{remark}
In words, $\kappa$ is a weighted-average susceptibility times a weighted-average \emph{inverse} susceptibility (the weights being the squared Perron components $p_i=\varphi_i^2$): it equals $1$ when susceptibility is uniform and grows with its spread, and the bound reduces this to the single ratio $r=q_{\max}/q_{\min}$ of most- to least-susceptible group. Numerically, a susceptibility ratio of about $2.3$ (the value in the data below) permits a source--burden angle of at most $23^{\circ}$, and a ratio of $4$ at most $37^{\circ}$. Only the \emph{contrast} $r$ enters: the spread between the most and least susceptible groups caps how far the operator can separate sources from burden, and homogeneous susceptibility $(r=1)$ forces perfect alignment $(\theta=0)$. This makes the slogan of Section~\ref{sec:nonnormal} quantitative: heterogeneity is what bends the eigenvectors, and $r$ measures how much it can. The bound is one-sided by design: $Q$ \emph{can} drive $\theta$ up to $\arccos\frac{2\sqrt r}{1+r}$ but need not, since the realized angle also depends on where the Perron mass $p$ sits. If instead susceptibility and infectiousness share a common profile $(D\propto Q$ in $K=QCD)$, then $K\propto QCQ$ is symmetric and $\theta=0$; misalignment requires the infectee- and infector-side scalings to differ, of which $K=QC$ is the canonical case. The complementary source of misalignment, asymmetry of $C$ itself, is invisible to this bound and is isolated empirically in Section~\ref{sec:empirical}.
\end{remark}

\section{Non-normality and transient risk}\label{sec:transient}

Non-normality also, in principle, decouples the long run from the short run. The threshold $\rho(K)$ governs the asymptotic fate of an introduction, but the worst-case amplification over a \emph{single} generation is the largest singular value,
\begin{equation}
\sigma_{\max}(K)=\lVert K\rVert_{2}\;\ge\;\rho(K),
\end{equation}
with equality if and only if $K$ is normal. The gap $\sigma_{\max}(K)-\rho(K)$ is thus simultaneously a measure of non-normality and of transient risk; its size is the generation-time analog of what ecologists call \emph{reactivity}
\cite{neubert1997}. A caricature shows the mechanism at full strength. Take
\begin{equation}
K=\begin{pmatrix} 0.3 & 2 \\ 0 & 0.6 \end{pmatrix},
\end{equation}
read as: group~2 infects both groups, group~1 infects almost no one. Then $R_0=\rho(K)=0.6<1$, so the epidemic dies out asymptotically, yet $\sigma_{\max}(K)\approx2.11$ (a reactivity ratio of $3.5$), the eigenvectors are badly misaligned ($\kappa\approx6.7$), and a single introduction into group~2
generates the total-infection sequence $1\to2.6\to2.16\to1.48\to\cdots$: a burst that more than doubles, and concentrates in the group that barely transmits,
before subsiding.

We stress that this is a caricature. As Section~\ref{sec:empirical} shows, real contact structures are non-normal but not \emph{that} non-normal: their reactivity ratios stay near one, so the ``subcritical yet explosive'' regime is a genuine qualitative
possibility rather than a large quantitative effect. The robust empirical consequence of non-normality is not transient blow-up; it is the eigenvector misalignment of Section~\ref{sec:equity}.

\section{Evidence from 177 national contact structures}\label{sec:empirical}

To test whether these are real features of transmission systems or merely possible ones, we use the synthetic age-structured contact matrices of Prem et al.\ \cite{prem2021}, which provide, for each of $177$ countries, a $16\times16$ matrix $C$ over five-year age bands ($0$--$4,\dots,75+$). From each we build a
next-generation matrix $K=\mathrm{diag}(u)\,C$, where $u$ is the age-specific relative susceptibility to infection estimated for COVID-19 by Davies et al.\ \cite{davies2020} (lowest in children, rising through adulthood).\footnote{We use
the posterior mean consensus susceptibility of Davies et al.\ (their Extended Data
Fig.~4, estimated jointly across six countries): $0.40, 0.38, 0.79, 0.86, 0.80,
0.82, 0.88, 0.74$ for ages $0$--$9$ through $70+$, applied by ten-year band to the
corresponding five-year bands. The qualitative results are insensitive to this
choice: repeating the analysis with a uniform profile ($u\equiv1$, i.e.\ bare
contacts) and with an independent monotone gradient leaves every conclusion below
intact.} All quantities are directional and scale-free, so the overall transmissibility constant is irrelevant. We also compute each country's median age from the WPP2019 population data distributed with the contact matrices, so the demographic and contact variables share a single source.

Three findings emerge (Figure~\ref{fig:emp}). First, \emph{no matrix is normal}. The normalized departure from normality of the bare contact matrix,
$\nu=\lVert K\rVert_F^{-1}\sqrt{\lVert K\rVert_F^{2}-\sum_i|\lambda_i|^2}$, has median $0.20$ across the $177$ countries (range $0.13$--$0.38$) and is never zero; applying the Davies susceptibility profile raises it further (median $0.33$, up to $0.59$), exactly as the $K=QCD$ mechanism of Section~\ref{sec:nonnormal} predicts: the diagonal $Q$ manufactures non-normality.

Second, \emph{the source and burden eigenvectors are substantially misaligned}. The angle between the leading left eigenvector $w$ (who drives) and right eigenvector $v$ (who bears) has median $15^{\circ}$ for bare contacts and
$26^{\circ}$ for the next-generation matrix; $60$ of $177$ countries exceed $30^{\circ}$. In Italy (Figure~\ref{fig:emp}b) both distributions peak in working age, but every band above $55$ bears a larger share of infection than it contributes; for instance the $60$--$64$ band bears nearly twice the share ($0.075$) it contributes ($0.041$), so burden sits systematically older than transmission. The generation that transmits and the generation that suffers are not the same, and $\rho(K)$ reports neither. The misalignment is not an artifact of the susceptibility profile: the median angle is $15^{\circ}$ for bare contacts, $26^{\circ}$ for the Davies next-generation matrix, and $26^{\circ}$ for an independent monotone susceptibility gradient (full sweep in the supplement).

Third, \emph{the misalignment splits into a susceptibility part that the bound of Section~\ref{sec:bound} controls and a residual part contributed by contact asymmetry}. The Davies susceptibility contrast is $r=q_{\max}/q_{\min}=0.88/0.38=2.32$, for which Proposition~\ref{prop:kant} caps the source--burden angle of a \emph{symmetric}-contact operator at $\arccos\frac{2\sqrt r}{1+r}=23.4^{\circ}$. Reciprocity-symmetrizing each country's contact matrix, $\tfrac12(C+C^{\top})$, and recomputing, every one of the $177$ angles indeed falls below this bound (median $16.8^{\circ}$, maximum $20.1^{\circ}$; Figure~\ref{fig:emp}c); the gap to $23.4^{\circ}$ is expected, since the Davies profile has intermediate susceptibilities and so cannot realize the two-point extremal configuration. The full, asymmetric matrices reach $42.9^{\circ}$: the excess above the symmetric value (median $9.4^{\circ}$, up to $23.3^{\circ}$) is precisely the misalignment contributed by non-reciprocity of contact, which the bound, being a statement about symmetric $C$, does not see. Observed misalignment is thus the sum of two identifiable mechanisms: susceptibility heterogeneity, provably limited by the contrast $r$, and the asymmetry of who-contacts-whom.\footnote{Source--burden misalignment also correlates strongly with a country's median age ($r=-0.97$ for the full matrices). We do not emphasize this: median age is a summary of the same age distribution that generates $C$, and a counterfactual in which a single fixed mixing pattern is reweighted to each country's demography reproduces the correlation on its own ($r=-0.99$), while holding demography fixed and varying only mixing behavior reverses its sign ($r=+0.95$). The median-age relationship is therefore a largely mechanical consequence of age composition rather than an independent empirical regularity, and we report it only for completeness. It is not merely a developed-versus-developing contrast: the negative association persists within continental regions (within-region $r=-0.91$ in Africa, $-0.95$ in Asia, $-0.98$ in the Americas, and a weaker $-0.45$ in Europe, whose median ages span only $40$--$47$ years and so leave little to correlate), as a mechanical dependence on age composition would predict.}

Finally, a deliberate null result, and a useful one. The reactivity ratio $\sigma_{\max}(K)/\rho(K)$ stays modest for every country ($\le 1.09$ for bare contacts, $\le 1.26$ for the next-generation matrix), nowhere near the factor of $3.5$ in the caricature of Section~\ref{sec:transient}. Real contact structures are non-normal enough to relocate the burden but not enough to produce large sub-threshold transient outbreaks. This sharpens rather than weakens the paper's claim. Non-normality has two possible consequences, misalignment of the source and burden eigenvectors and transient amplification; finding the second empirically small tells us which one carries the social content. The effect worth attending to and reporting alongside $R_0$ is the eigenvector misalignment; ruling out the transient mechanism licenses that focus.

Two limitations bound these claims. The analysis is a structural, cross-sectional comparison of next-generation operators, not a dynamic outbreak simulation: it characterizes how $K$ distributes reproduction, not the realized epidemic under depletion, intervention, or behavioral feedback. And Proposition~\ref{prop:kant} assumes reciprocal (symmetric) contact; real contact matrices are only approximately reciprocal, so its bound governs the susceptibility component isolated above rather than the full observed angle, whose remainder we attribute to asymmetry.

\begin{figure}[t]
\centering
\includegraphics[width=\textwidth]{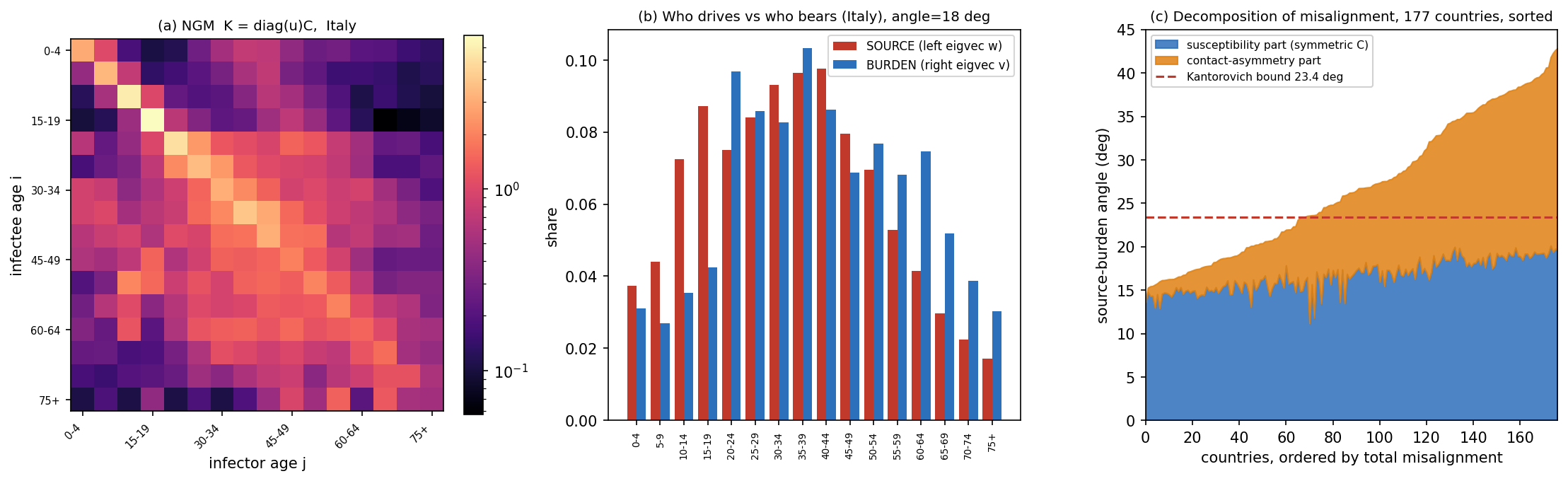}
\caption{Non-normality and eigenvector misalignment in real transmission operators. \textbf{(a)} A next-generation matrix $K=\mathrm{diag}(u)C$ for Italy (contacts from Prem et al.\ \cite{prem2021}; susceptibility $u$ from Davies et al.\ \cite{davies2020}; log color scale). \textbf{(b)} Leading left eigenvector $w$ (source: who drives transmission) versus right eigenvector $v$ (burden: who is infected), by age. The source--burden angle is the angle between these two distributions viewed as vectors: $0^{\circ}$ when they coincide (the groups that drive transmission are exactly those that bear it) and larger as they separate. Here the two sit $17.7^{\circ}$ apart, with every band
above $55$ bearing more than it transmits. \textbf{(c)} Decomposition of misalignment for all $177$ countries, ordered by total. Each country's source--burden angle splits into a susceptibility component (blue; the angle under reciprocity-symmetrized contacts) and a contact-asymmetry remainder (orange). The susceptibility component stays below the Kantorovich bound of Proposition~\ref{prop:kant} ($23.4^{\circ}$ for the Davies contrast, dashed line) in every country, as it must; the asymmetry of who-contacts-whom supplies the rest.}
\label{fig:emp}
\end{figure}

\section{Why ``how much of \texorpdfstring{$R_0$}{R0} is biology?'' is malformed}

The decomposition $K=QCD$ invites the question of how much of $R_0$ is biological ($Q$) and how much is social ($C$). The question has no answer, because the spectral radius is not multiplicative:
\begin{equation}
\rho(QCD)\neq\rho(Q)\,\rho(C)\,\rho(D)
\end{equation}
in general. $R_0$ is a nonlinear, global functional of the whole operator, in which biology and social structure are entangled through the dominant mode. Asking what fraction of it is biological is like asking what fraction of a rectangle's area is its length. The honest reading is counterfactual,
\begin{equation}
R_0(\mathcal{S})=\rho\big(K(\mathcal{S})\big),
\end{equation}
where $\mathcal{S}$ is the modeled society, its contact network, its distribution of exposure time, its behavioral baseline \cite{mossong2008,fenichel2011}. The same pathogen yields different $R_0$ in different $\mathcal{S}$ not because the
biology changed but because $\rho$ reads the whole operator, and, as Section~\ref{sec:empirical} shows, the operator is mostly social.

\section{The omission changes the decision}\label{sec:decision}

The reporting gap becomes an allocation gap the moment one intervenes, and the gap is large: across all $177$ countries the age group one should protect to minimize $R_0$ differs from the group one should protect to minimize deaths in every single one, the two sitting a median of $48$ years apart and the $R_0$-optimal choice leaving per-infection mortality about $1.8\times$ higher (quantified below). Intuitively, the two goals pull in opposite directions: cutting transmission means protecting the people who \emph{spread} infection, while cutting deaths means protecting the people who \emph{die} from it, and for an age-structured respiratory disease these are largely different groups, the young and mobile versus the old and frail. The rest of this section makes that intuition precise and measures it. Suppose a planner can reduce the susceptibility of a single age group (by vaccination, shielding, or prophylaxis), scaling $u_g\mapsto(1-\epsilon)u_g$. This perturbs $K=\operatorname{diag}(u)C$ by $\delta K=-\epsilon\,e_g e_g^{\top}K$, and the first-order eigenvalue formula $d\rho=w^{\top}\delta K\,v/(w^{\top}v)$ with $Kv=\rho v$ gives
\begin{equation}
-\frac{d\log\rho}{d\epsilon}\bigg|_{\epsilon=0}=\frac{w_g v_g}{w^{\top}v}.
\end{equation}
The reduction in $R_0$ from protecting group $g$ is thus governed by $w_g v_g$, the product of reproductive value and incidence. The mortality directly averted, by contrast, is governed by $v_g\,\mathrm{IFR}_g$: the infections borne by $g$ times their fatality risk. Both objectives weight a group's own infections $v_g$ and differ only in the second factor: $R_0$-control multiplies by the onward reproductive value $w_g$, a property of the transmission operator and precisely what the eigenvalue's sensitivity sees, while mortality-control multiplies by the fatality risk $\mathrm{IFR}_g$, a property outside it. The two agree only when reproductive value tracks severity. For respiratory infection, they run in opposite directions: reproductive value follows contact intensity and peaks in the young and working-aged, whereas fatality risk climbs monotonically with age. Minimizing $R_0$ therefore pulls toward the transmission core and minimizing mortality toward the severity tail, distinct populations, because $w$ concentrates where $v$ and $\mathrm{IFR}$ do not. The policy divergence is the source--burden misalignment of Section~\ref{sec:equity}, now sharpened by a severity weighting that compounds the older skew of the burden and recast as a choice between two interventions.

A concrete allocation shows the cost. Using Italy's contact matrix, the Davies susceptibility, an age fatality profile of the log-linear shape estimated by Levin et al.\ \cite{levin2020},\footnote{Concretely $\log_{10}\mathrm{IFR}(a)=-3.27+0.0524\,a$ with $\mathrm{IFR}$ in percent and $a$ the band-midpoint age, the log-linear metaregression of Levin et al.; these coefficients reproduce that paper's reported age anchors (e.g.\ $0.4\%$ at age $55$, $4.6\%$ at $75$, $15\%$ at $85$), and by the robustness note below they set the magnitude of the regret, not the identity of the two optima.} and a dose budget of $10\%$ of the population (all-or-nothing efficacy), two planners choose oppositely (Table~\ref{tab:alloc}). Planner~R, minimizing $\rho$, protects ages $35$--$39$ and lowers $R_0$ by $12\%$; but by displacing infection onto older groups it \emph{raises} the expected fatality of a typical infection by $13\%$. Planner~E, minimizing burden-weighted lethality, protects $75+$, cutting that lethality by $32\%$ while $\rho$ falls only $1\%$. Choosing the $R_0$-optimal target leaves per-infection mortality $1.7\times$ higher than that of the mortality-optimal target. The pattern is universal. Across all $177$ countries (budget $10\%$, full efficacy) the two optimal targets differ in every one: the $R_0$-optimal group is always working-aged ($20$--$44$, most often $25$--$29$), the mortality-optimal group is $75+$ in $176$ of $177$, and the two sit a median of $48$ years apart (range $38$--$58$). Adopting the $R_0$-optimal target leaves per-infection mortality a median $1.84\times$ higher than the mortality-optimal target would (range $1.5$--$2.0\times$), while the mortality-optimal target sacrifices almost nothing in transmission; it lowers $R_0$ by a median of $1\%$ against the $R_0$-optimal target's $11\%$. The divergence persists across budgets from $5\%$ to $20\%$ and efficacies from $50\%$ to $100\%$.

\begin{table}[h]
\centering
\small
\begin{tabular}{lcccc}
\toprule
Planner & Objective & Target group & $\Delta\rho$ & $\Delta$ per-infection lethality \\
\midrule
R & minimize $\rho(K)=R_0$ & $35$--$39$ & $-12\%$ & $+13\%$ \\
E & minimize burden $\times$ severity & $75+$ & $-1\%$ & $-32\%$ \\
\bottomrule
\end{tabular}
\caption{Two planners, one dose budget ($10\%$ of Italy's population), opposite choices. Minimizing the scalar $R_0$ selects the transmission hub and \emph{raises} the average infection fatality by shifting the burden onto the old; minimizing burden-weighted severity selects the old and barely moves $R_0$.}
\label{tab:alloc}
\end{table}

Nothing in the argument is specific to age. The sensitivity $w_g v_g$ and the bound of Proposition~\ref{prop:kant} are stated for an arbitrary diagonal susceptibility $Q$ and a general contact structure $C$, so the same divergence arises for any partition of a population into interacting groups in which those with high reproductive value are not those with high severity or cost. Age is only the most familiar such split. The same logic applies to occupation (essential and service workers sustain transmission while severe outcomes fall on their older or comorbid contacts), to geography (mobility hubs propagate infection, peripheral or under-resourced regions bear the mortality), and to socioeconomic exposure (crowded work and housing raise transmission where reduced care access lowers survival). We demonstrate only the age case, for which matched contact and fatality data exist at a global scale; the other axes would require the corresponding stratified contact matrices and severity estimates, and we flag them as the natural next application rather than as results established here.

Two caveats fix the scope. The lethality index $D=\sum_i\mathrm{IFR}_i\,v_i$ is the expected fatality of a typical infection under the invading age-distribution $v$, a compositional quantity, not a total death count; lowering $\rho$ also reduces the number eventually infected, which a dynamic model would credit as lives saved. But no size-weighting reconciles the two targets, since they sit at opposite ends of the age axis. And the fatality profile is a stylized log-linear gradient; its slope scales the magnitude of the regret, not the identity of the optima. The point is not a vaccine recommendation but a demonstration: $R_0$ cannot even express the mortality objective, because what that objective needs, the burden eigenvector $v$ weighted by severity, is exactly what $\rho(K)$ discards.

\section{What to report instead of a scalar}\label{sec:report}

If the eigenvector is the harm and the non-normality is the heterogeneity, then reporting $R_0$ alone discards exactly what a socially informed reader needs. Three quantities travel together and should be reported together:
\begin{enumerate}
\item $\rho(K)=R_0$, \emph{whether} infection invades (the threshold);
\item the leading eigenvectors $v$ and $w$, or their misalignment $\kappa$, \emph{to whom} it happens and \emph{who} drives it (the equity content, which Section~\ref{sec:empirical} shows is large in practice);
\item the reactivity $\sigma_{\max}(K)$, \emph{how violently} a badly placed introduction can surge before the threshold takes hold (a transient risk that is small for typical contact structures but cheap to check and occasionally decisive).
\end{enumerate}
None of these is a refinement of $R_0$; each is a distinct piece of the matrix that the spectral radius removes. All three are computed en route to $\rho(K)$, so the triple costs nothing, and it converts $R_0$ from a number that conceals its own assumptions into one that declares them. The added value is largest precisely where contact is strongly structured and susceptibility differs markedly across groups, the regime in which $\rho(K)$ and its eigenvectors diverge; for a population near homogeneity, the eigenvectors nearly coincide, and $\rho(K)$ alone loses little.

\section{Conclusion}

$R_0=\rho(K)$ is an act of compression, and like any compression it is defined as much by what it deletes as by what it keeps. It keeps the dominant eigenvalue: the yes-or-no of invasion. It deletes the eigenvectors, which say where the infection concentrates, and the non-normality, which says how far the short run can stray from the long run. On real national contact structures, those deletions are not incidental: every one of $177$ transmission operators is non-normal, and its source and burden directions diverge by tens of degrees, a divergence that susceptibility heterogeneity provably bounds and contact asymmetry pushes further. The generation that drives an epidemic is systematically not the one that suffers it, and the scalar threshold is silent on the difference.

We do not suggest replacing $R_0$, nor do we claim that its eigenvectors always matter. The practical recommendation is narrower: in populations with substantial age or social heterogeneity and differential susceptibility, the regime in which the source and burden directions measurably diverge, reporting the leading eigenvectors and the reactivity alongside $\rho(K)$ carries information the scalar cannot, at no extra computational cost. Where a population is close to homogeneous, the eigenvectors nearly coincide, and $\rho(K)$ largely suffices; the case for the fuller summary grows with the heterogeneity it is meant to expose.

The eigenvalue is the pathogen's verdict on the population; the eigenvectors are the population itself. Where that population is strongly structured, to report only the first is to describe an epidemic while omitting whose epidemic it is.

\end{document}